\tikzset{/tikz/notestyleraw/.append style={text=black}}
\newtheorem{thm}{Theorem}[section]
\newtheorem{lem}[thm]{Lemma}
\newtheorem{defn}[thm]{Definition}
\newtheorem{prop}[thm]{Proposition}
\newtheorem{rmk}[thm]{Remark}
\newcommand{\be}{\begin{eqnarray}}
\newcommand{\ee}{\end{eqnarray}}
\newcommand{\beq}{\begin{equation}}
\newcommand{\eeq}{\end{equation}}
\newcommand{\ben}{\begin{eqnarray*}}
\newcommand{\een}{\end{eqnarray*}}
\newcommand{\beal}{\begin{aligned}}
\newcommand{\enal}{\end{aligned}}
\newcommand{\bc}{\begin{cases}}
\newcommand{\ec}{\end{cases}}
\newcommand{\R}{\mathbb{R}}
\newcommand{\PP}{\mathbb{P}}
\newcommand{\cF}{\mathcal{F}}
\newcommand{\cA}{\mathcal{A}}
\newcommand{\cM}{\mathcal{M}}
\newcommand{\wt}{\widetilde}
\newcommand{\wh}{\widehat}
\newcommand{\om}{\omega}
\newcommand{\lb}{\lambda}
\newcommand{\T}{\mathbb{T}}
	\def\textr{\textcolor{red}}
\begin{document}

\title{Essential forward weak KAM solution for the convex Hamilton-Jacobi equation
}


\author{
Xifeng Su      
\and Jianlu Zhang
}


\institute{Xifeng Su\at Laboratory of Mathematics and Complex Systems (Ministry of Education), School of Mathematical Sciences, Beijing Normal University, Beijing 100875, People's Republic of China
\\
Tel: +86-15910594182\\
\email{xfsu@bnu.edu.cn, billy3492@gmail.com}
\and
Jianlu Zhang \at
              Hua Loo-Keng Key Laboratory of Mathematics \& Mathematics Institute, Academy of Mathematics and systems science, Chinese Academy of Sciences, Beijing 100190, China \\
              Tel: +86-182-1038-3625\\
              \email{jellychung1987@gmail.com}           
}

\date{Received: date / Accepted: date}

\maketitle

\begin{abstract}
For a convex, coercive continuous Hamiltonian on a compact closed Riemannian manifold $M$, we construct a unique forward weak KAM solution of 
\[
H(x, d_x u)=c(H)
\]
by a vanishing discount approach, where $c(H)$ is the Ma\~n\'e critical value. We also discuss the dynamical significance of such a special solution.
\keywords{discounted equation, weak KAM solution, Aubry Mather theory,
Hamilton-Jacobi equation, viscosity solution}
 \subclass{35B40, 37J50, 49L25,37J51}
\end{abstract}


\section{Introduction}\label{s1}

For a compact connected manifold $M$ without boundary, the {\sf Hamiltonian} is usually mentioned as a continuous function  defined on its cotangent bundle $T^*M$. In \cite{LPV}, the authors firstly proposed the {\sf ergodic approximation} technique, to consider the existence of so called {\sf viscosity solutions} to the {\sf Hamilton-Jacobi equation}
\beq\label{eq:hj}\tag{HJ$_0$}
H(x, d_x u)=c(H),\quad x\in M
\eeq
for the {\sf Ma\~n\'e critical value} 
\[
c(H):=\inf\{c \in\R | \exists\ \om\in C(M,\R) \text{ such that }H(x,d_x\om)\leq c,\quad a.e.\ x\in M\}.
\]
The Hamiltonian they concerned satisfies
\begin{itemize}
\item  {\sf (Coercivity)} $H(x,p)$ is coercive in $p\in T_x^*M$, uniformly w.r.t. $x\in M$.
\item {\sf (Convexity)} $H(x,p)$ is convex in $p\in T_x^*M$ for all $x\in M$.
\end{itemize}
 They perturbed (\ref{eq:hj}) by the following {\sf discounted equation}
\be\label{eq:d-hj}
\lb u+H(x, d_x u)=c(H),\quad x\in M, \lb >0
\ee
of which the {\sf Comparison Principle} is allowed. Therefore, the viscosity solution $u_\lb^-$ of (\ref{eq:d-hj}) is unique.  In \cite{DFIZ}, they established the convergence of $u_\lb^-$ as $\lb\rightarrow 0_+$, to a specified viscosity solution $u_0^-$ of (\ref{eq:hj}) which can be characterized by the combination of {\sf subsolutions} of (\ref{eq:hj}), 
or {\sf Peierls barrier} 
\[
h^\infty:M\times M\rightarrow \R
\]
w.r.t. the {\sf projected Mather measures $\mathfrak M$} of (\ref{eq:hj}), see Appendix \ref{a1} for the relevant definitions of $\mathfrak M,\ h^\infty$, subsolutions etc.\\

In this paper, we consider a negative limit technique and try to find another specified solution of (\ref{eq:hj}). Precisely, we consider 
\beq\label{eq:d-hj-n}\tag{HJ$_\lb$}
-\lb u+H(x,d_x u)=c(H),\quad x\in M, \lb>0
\eeq
of which a unique {\sf forward $\lb-$weak KAM solution} $u_\lb^+$ can be found (see Remark~\ref{defn:lb-w-kam} for the definition). As $\lb\rightarrow 0_+$, we get the following conclusion:

\begin{thm}\label{thm:1}
Let $H : T^*M \rightarrow\R, (x, p) \mapsto H(x,p)$ be a continuous Hamiltonian  coercive and convex in $p$. For $\lb>0$, the unique forward $\lb-$weak KAM solution $u_\lb^+$ of (\ref{eq:d-hj-n}) uniformly converges as $\lb\rightarrow 0_+$, to a unique {\sf forward $0-$weak KAM solution}\footnote{see Definition \ref{defn:w-kam}} $u_0^+$ of (\ref{eq:hj}), which can be interpreted as 
\be\label{eq:1}
u_0^+(x)=\inf\cF_+
\ee
with
\be
\cF_+:=\Big\{w\text{ is a subsolution of (\ref{eq:hj}) }\Big|\int_Mw d\mu\geq 0,\  \forall \mu\in \mathfrak M\Big\}
\ee
and
\be\label{eq:2}
 u_0^+(x)=-\inf_{\mu\in\mathfrak M}\int_Mh^\infty(x,y)d\mu(y).
\ee
\end{thm}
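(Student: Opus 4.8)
The plan is to follow the vanishing discount scheme of \cite{DFIZ}, but adapted to the forward, negatively discounted equation (\ref{eq:d-hj-n}); a convenient device is the time-reversal that turns the forward problem for $H$ into the classical backward problem for the reflected Hamiltonian $\check H(x,p):=H(x,-p)$, which is again convex and coercive and whose projected Mather set coincides with $\mathfrak{M}$. Writing the defining relation as $H(x,d_xu_\lb^+)=c(H)+\lb u_\lb^+$ and invoking coercivity, I expect a uniform Lipschitz bound on $\{u_\lb^+\}_{\lb>0}$ once $\{\lb u_\lb^+\}$ is shown to be uniformly bounded. The latter follows from the variational representation $u_\lb^+(x)=-\inf_{\gamma(0)=x}\int_0^{+\infty}e^{-\lb s}\big(L(\gamma,\dot\gamma)+c(H)\big)\,ds$, since $\int_0^{+\infty}\lb e^{-\lb s}\,ds=1$ exhibits $\lb u_\lb^+$ as a weighted average of $L+c(H)$, which is bounded below, while the criticality of $c(H)$ keeps the action along Mather-calibrated curves bounded uniformly in $\lb$. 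Fixing the normalization supplied by a critical subsolution then yields equi-boundedness, and Arzel\`a--Ascoli produces uniformly convergent subsequences $u_{\lb_n}^+\to u_0^+$; passing to the limit in the forward calibration identity shows that any such limit is a forward $0$-weak KAM solution of (\ref{eq:hj}).

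The core is the identification of the limit through (\ref{eq:1}). For the upper estimate I would show $u_0^+\le w$ for every $w\in\cF_+$: since $w$ is a subsolution and $\int_M w\,d\mu\ge0$ for all $\mu\in\mathfrak{M}$, testing the discounted relation along a calibrated curve against a projected Mather measure, and then letting $\lb\to0_+$, forces $u_\lb^+$ below $w$ in the limit. For the reverse estimate I would verify that $u_0^+$ is itself admissible, i.e. a subsolution satisfying $\int_M u_0^+\,d\mu\ge0$ for every $\mu\in\mathfrak{M}$, which gives $\inf\cF_+\le u_0^+$ and hence the identity (\ref{eq:1}).

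The Peierls-barrier formula (\ref{eq:2}) would then follow by noting that, for each $\mu\in\mathfrak{M}$, the function $x\mapsto-\int_M h^\infty(x,y)\,d\mu(y)$ is a forward $0$-weak KAM solution, and that the constrained infimum over $\cF_+$ selects exactly $-\inf_\mu\int_M h^\infty(x,y)\,d\mu(y)$. Equivalently, one may transport the corresponding \cite{DFIZ} representation for $\check H$ through the relations $\check h^\infty(x,y)=h^\infty(y,x)$ and $w\mapsto -w$ (which sends subsolutions of $\check H$ to subsolutions of $H$ and the constraint $\int w\,d\mu\le0$ to $\int(-w)\,d\mu\ge0$), recovering both (\ref{eq:1}) and (\ref{eq:2}) with the same $\mathfrak{M}$. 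Finally, since (\ref{eq:1})--(\ref{eq:2}) pin the limit down uniquely and independently of the subsequence, every convergent subsequence shares the same limit, so the full family $u_\lb^+$ converges uniformly to $u_0^+$.

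I expect the main obstacle to be the absence of a comparison principle for (\ref{eq:d-hj-n}): neither uniqueness of $u_\lb^+$ nor the one-sided bounds against $\cF_+$ can be read off from the PDE, and they must instead be extracted from the forward calibration and from averaging against projected Mather measures, where the sign of the discount is decisive. Controlling these Mather-measure averages uniformly as $\lb\to0_+$---the step that genuinely distinguishes the negative-discount case from the treatment in \cite{DFIZ}---will require the bulk of the technical work.
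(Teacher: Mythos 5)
Your proposal is essentially correct, and your closing ``equivalently'' remark is in fact the paper's entire proof; but you relegate it to a side comment and spend the main argument on a direct negative-discount adaptation of \cite{DFIZ}, which the paper deliberately avoids. The paper works exclusively through the reflected Hamiltonian: Proposition \ref{prop:dom-fun}(5) shows that $-u_\lb^+$ is the unique viscosity solution of the \emph{positively} discounted equation \eqref{eq:hj-sym} for $\wh H(x,p)=H(x,-p)$, so uniqueness of $u_\lb^+$ and the equi-bounds come straight from the comparison principle --- contrary to your expectation that neither uniqueness nor the one-sided bounds ``can be read off from the PDE.'' Convergence, together with both representations \eqref{eq:1} and \eqref{eq:2}, is then imported wholesale from \cite{DFIZ} (Lemma \ref{lem:sym}) and translated through the dictionary $w\mapsto -w$, $\wh h^\infty(y,x)=h^\infty(x,y)$, $\wh{\mathfrak M}=\mathfrak M$. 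Consequently the ``bulk of the technical work'' is not uniform control of Mather-measure averages under the negative discount --- nothing of that sort appears in the paper --- but rather two points your sketch passes over: (a) establishing the dictionary itself in the $C^0$ setting (Lemma \ref{lem:sym-h}), above all $\wh{\mathfrak M}=\mathfrak M$, which you assert in one clause but which cannot invoke flow-invariance of Mather measures (no Euler--Lagrange flow exists for merely continuous $H$) and is instead proved via the closed-measure characterization of $-c(H)$ (Theorem \ref{thm:mane}) and approximation of closed measures by long curves (Proposition 2-4.3 of \cite{CI}); and (b) verifying that the limit is a genuine forward $0$-weak KAM solution (Lemma \ref{lem:for}), where the calibrated-curve property is extracted from uniqueness for the Cauchy problem associated with $\wh H$, not by passing to the limit in the discounted calibration as you propose. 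Your primary route (Arzel\`a--Ascoli compactness plus direct verification of the bounds against $\cF_+$) could presumably be carried through, but it would amount to re-proving the \cite{DFIZ} estimates with a sign flip, and your sketch leaves exactly those estimates at the level of expectation; the symmetric-Lagrangian route buys all of them for free at the cost of Lemma \ref{lem:sym-h}.
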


\begin{rmk}
The novelty of this paper is that we adapt a symmetric Lagrangian skill to our $C^0-$setting. The lack of regularity invalidates a bunch of important properties of the Mather measures, Peierls barrier etc., so we have to find substitutes in the weak sense.

Besides,  we  mention that $-u_0^+(x)$ is a viscosity solution of the symmetric equation
\[
H(x,-d_x u(x))=c(H).
\]
Comparing to the backward $0-$weak KAM solutions, the notion of viscosity solutions is more familiar to PDE specialists, although both are proved to be equivalent in \cite{F}.
%
\end{rmk}



\subsection{Dynamic interpretation of $u_0^+$}

Now the vanishing discount approach supplies us with a pair of solutions of \eqref{eq:hj}:
\beq
\left\{
\beal
&u_0^-(x)=\inf_{\mu\in\mathfrak M}\int_Mh^\infty(y,x)d\mu(y),\\
&u_0^+(x)=-\inf_{\mu\in\mathfrak M}\int_Mh^\infty(x,y)d\mu(y).
\enal
\right.
\eeq
\begin{defn}[Conjugated Pair]
A backward $0-$weak KAM solution $u^-$ of \eqref{eq:hj} is conjugated to a forward  $0-$weak KAM solution $u^+$, if 
\begin{itemize}
\item  $u^-=u^+$ on the {\sf projected Mather set} $\cM$ (see Definition \ref{defn:mat}). 
\item  $u^-\geq u^+$ on $M$.
\end{itemize}
\end{defn}
\textr{In \cite{F}, above definition is actually proposed to $C^2-$Tonelli Hamiltonian\footnote{$H:(x,p)\in T^*M\rightarrow\R$ is called {\sf Tonelli}, if it's positive definite and superlinear in $p$}. However, we have no difficulty to reserve this concept to the $C^0-$case by means of the evidence in \cite{DS}.} Moreover, For the following typical Hamiltonians, $(u_0^-, u_0^+)$ indeed forms a {\sf conjugated pair}:
\begin{enumerate}
\item {\bf (uniquely ergodic)} Suppose $\mathfrak M$ consists of a uniquely ergodic projected Mather measure \textr{(generic for $C^2-$Tonelli Hamiltonians, see \cite{Mn})}, then 
\[
d_c(x,y):=h^\infty(x,y)+h^\infty(y,x)\geq 0
\]
for all $x,y\in M$, and `$=$' holds for $x,y\in \cM$ (due to the definition of the Mather measure in Appendix \ref{a1}
). So $(u_0^-, u_0^+)$ is a conjugated pair.\\
\item {\bf (mechanical system)} For a mechanical Hamiltonian
\[
H(x,p)=\frac12 \langle p,p\rangle+V(x), 
\]
we can easily get $c(H)=\max_{x\in M}V(x)$, then the associated $L(x,v)+c(H)\geq 0$ on $TM$. Consequently, $h^\infty:M\times M\rightarrow\R$ is nonnegative, so $u_0^-\geq u_0^+$ on $M$. On the other side, due to the definition of $\mathfrak M$, all the Mather measures are supported by equilibriums. So $u_0^-=u_0^+$ on $\cM$. In summary $(u_0^-, u_0^+)$ is a conjugated pair.\\
\item {\bf (constant subsolution)} Such a case is also discussed in \cite{DFIZ}. If $H(x,0)\leq c(H)$ for all $x\in M$, i.e. constant is a subsolution of \eqref{eq:hj}, then due to the {\sf Young Inequality} we get 
\[
L(x,v)\footnote{the Lagrangian function is defined in \eqref{eq:leg}}
+c(H)\geq L(x,v)+H(x,0)\geq \langle v,0\rangle=0
\]
for all $(x,v)\in TM$, by a similar analysis like the case of mechanical systems $(u_0^-, u_0^+)$ proves to be a conjugated pair. 
\end{enumerate}

\subsection{Organization of the article.}\label{s1.6}

 In Sec. \ref{s2}, we prove some variational properties for nonsmooth Lagrangians. In Sec. \ref{s3}, we prove the convergence of $u_\lb^+$ as $\lb\rightarrow 0_+$ and give a representative formula for the limit. 
 For the consistency and readability of the article, some preliminary materials are moved to Appendix.
\vspace{10pt}

\noindent{\bf Acknowledgement.} J. Zhang is supported by the National Natural Science Foundation of China (Grant No. 11901560). X. Su is supported by the National Natural Science Foundation of China (Grant No. 11971060, 11871242).

\vspace{20pt}

\section{Nonsmooth symmetric Lagrangians}\label{s2}

With the same adaption as in \cite{DFIZ}, without loss of generality we can assume $H(x,p)$ is superlinear in $p$, i.e.
\begin{itemize}
\item {\sf (Superlinearity)} $\lim_{|p|\rightarrow +\infty}H(x,p)/|p|=+\infty$, for any $x\in M$.
\end{itemize}
 In that case, by Fenchel's formula (see \ref{eq:leg}), the Hamiltonian has an associated {\sf Lagrangian} $L :(x,v)\in  T M \rightarrow \R$ which is superlinear and convex in the fibers of the tangent bundle. 
Consequently,  we  can propose a {\sf symmetrical Lagrangian}  $\wh L(x,v):=L(x,-v)$, of which the following fundamental facts hold:
\begin{lem}\label{lem:sym-h}
\begin{itemize}
\item [(i)] The conjugated Hamiltonian $\wh H: T^*M\rightarrow\R$ of $\wh L(x,v)$ satisfies 
$\wh H(x,p)=H(x,-p)$ for all $(x,p)\in T^*M$. Therefore, $\wh H$ is also continuous, superlinear and convex. 
\item [(ii)] $\wh H(x, d_x\om)\leq c \iff  H(x,d_x(-\om))\leq c$
\item [(iii)] $c(\wh H) = c(H)$.
\item [(iv)] The projected Mather measure set $\wh{\mathfrak M}$ (associated with $\wh H(x,p)$) 
keeps the same with $\mathfrak M$.
\item [(v)] The Peierls barrier function associated with $\wh L(x,v)$ satisfies $\wh h^\infty(y,x)= h^\infty(x,y)$ for any $x,y\in M$.
\end{itemize}
\end{lem}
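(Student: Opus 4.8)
The plan is to treat the five items in order, all of them flowing from the single geometric observation that passing from $L$ to $\wh L(x,v)=L(x,-v)$ is a velocity reversal, which at the level of trajectories becomes a time reversal. For (i) I would simply compute the Fenchel conjugate: by \eqref{eq:leg},
\[
\wh H(x,p)=\sup_{v\in T_xM}\big(\langle p,v\rangle-\wh L(x,v)\big)=\sup_{v}\big(\langle p,v\rangle-L(x,-v)\big),
\]
and the substitution $w=-v$ turns the right-hand side into $\sup_w\big(\langle -p,w\rangle-L(x,w)\big)=H(x,-p)$. Since $p\mapsto -p$ is a linear homeomorphism of each fiber $T_x^*M$, the continuity, convexity and superlinearity of $H$ transfer verbatim to $\wh H$. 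Item (ii) is then immediate from $d_x(-\om)=-d_x\om$, which gives $\wh H(x,d_x\om)=H(x,-d_x\om)=H(x,d_x(-\om))$. Item (iii) follows by inserting (ii) into the definition of the Ma\~n\'e critical value: because $\om\mapsto-\om$ is a bijection of the admissible test functions, the set of constants $c$ admitting a subsolution is identical for $\wh H$ and for $H$, so the two infima coincide.

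The substantive content is in (iv) and (v). For (iv) I would use the velocity-reversal involution $\iota:TM\to TM$, $\iota(x,v)=(x,-v)$. If $\wt\mu$ is one of the closed (holonomic) measures used to define $\mathfrak M$ in Appendix \ref{a1}, then so is its pushforward $\iota_*\wt\mu$, since for every $f\in C^1(M)$ one has $\int\langle d_xf,v\rangle\,d(\iota_*\wt\mu)=-\int\langle d_xf,v\rangle\,d\wt\mu=0$; thus $\iota_*$ is an involution on the set of closed measures. The action is preserved,
\[
\int_{TM}\wh L\,d(\iota_*\wt\mu)=\int_{TM}L(x,-v)\,d(\iota_*\wt\mu)=\int_{TM}L(x,v)\,d\wt\mu,
\]
so $\iota_*$ maps minimizers of $\int L\,d\wt\mu$ bijectively onto minimizers of $\int\wh L\,d\wt\nu$ (both minimal values being $-c(H)=-c(\wh H)$ by (iii)). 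Since $\iota$ commutes with the base projection $\pi(x,v)=x$, the projected measures are unchanged, whence $\wh{\mathfrak M}=\mathfrak M$.

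For (v) I would transfer the same idea to curves. Given $\gamma:[0,t]\to M$ with $\gamma(0)=y$, $\gamma(t)=x$, the time-reversed curve $\eta(s):=\gamma(t-s)$ joins $x$ to $y$ and satisfies $\dot\eta(s)=-\dot\gamma(t-s)$, so after the substitution $\tau=t-s$,
\[
\int_0^t L(\eta,\dot\eta)\,ds=\int_0^t L\big(\gamma(\tau),-\dot\gamma(\tau)\big)\,d\tau=\int_0^t\wh L(\gamma,\dot\gamma)\,d\tau.
\]
Hence reversal is an action-preserving bijection between $L$-admissible curves from $x$ to $y$ and $\wh L$-admissible curves from $y$ to $x$; together with $c(\wh H)=c(H)$ from (iii) this yields $h_t(x,y)=\wh h_t(y,x)$ for each finite $t$, and passing to the limit that defines the Peierls barrier gives $\wh h^\infty(y,x)=h^\infty(x,y)$.

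I expect the real difficulty to lie not in these identities but in the $C^0$ bookkeeping flagged in the Remark. Because there is no Euler–Lagrange flow, (iv) must be phrased entirely through closed/holonomic measures, and one must check that whatever characterization of $\mathfrak M$ is adopted in Appendix \ref{a1} is genuinely invariant under $\iota_*$; likewise in (v) one must make sure the reversal argument is compatible with the weak definition of $h^\infty$, i.e. that the relevant $\liminf$ (or limit) exists, that merely absolutely continuous curves suffice, and that uniform action bounds legitimize interchanging the reversal with the limit $t\to\infty$. Verifying these ``substitutes in the weak sense'' is where the care is needed; the algebra of velocity and time reversal is routine.
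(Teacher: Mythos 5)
Your proposal is correct, and its skeleton --- velocity reversal on fibers, time reversal on trajectories --- is exactly the paper's. Items (i), (iii) and (v) coincide with the paper's proof essentially line by line (and your closing worry about interchanging reversal with the limit $t\to\infty$ in (v) is moot: as you yourself establish, $\wh h^t(y,x)=h^t(x,y)$ holds at each finite $t$, so the two $\liminf$'s agree trivially, with no uniform bounds needed). The genuine divergence is in (iv): you push a Mather measure forward directly under the involution $\iota(x,v)=(x,-v)$, checking by one-line computations that $\iota_*$ preserves $\PP_c(TM)$ (closedness: $\int\langle\nabla\phi,v\rangle\,d(\iota_*\wt\mu)=-\int\langle\nabla\phi,v\rangle\,d\wt\mu=0$) and intertwines the actions of $L$ and $\wh L$. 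The paper instead invokes Proposition 2-4.3 of \cite{CI} to approximate a Mather measure $\wt\mu$ by closed measures evenly distributed along curves $\gamma_n:[-T_n,T_n]\to M$, time-reverses those curves, and passes to the limit to conclude that $S^*\wt\mu$ is a Mather measure for $\wh L$. Since the paper's Theorem \ref{thm:mane} \emph{defines} Mather measures as minimizers of $\int L\,d\wt\mu$ over $\PP_c(TM)$, your direct argument is complete as stated and is shorter and more elementary; the detour through curve-supported approximations buys nothing here. One small caveat on (ii): your one-line chain-rule argument ($d_x(-\om)=-d_x\om$ a.e.\ by Rademacher) proves the statement in its a.e.\ pointwise reading, which is what the definition of $c(H)$ in the introduction uses, so (iii) goes through; but the paper's own Definition \ref{defn:sub-sol} phrases subsolutions via the domination $\om\prec\wh L+c$, and the paper accordingly proves (ii) by reversing curves in the domination inequality. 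To match that formulation literally, your pointwise argument needs the standard (but in the $C^0$ convex coercive setting nontrivial-to-cite) equivalence between a.e.\ subsolutions and dominated functions --- precisely one of the ``weak-sense substitutes'' you flagged, so it deserves an explicit sentence rather than being left implicit.
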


\begin{proof}
(i). Due to \eqref{eq:leg} and the definition of $\wh L$, we have
\[
\begin{split}
\wh H (x,p) &= \max_{v\in T_x M} \{\langle p,v\rangle - \wh L (x,v) \}\\
 & = \max_{v\in T_x M} \{\langle p,v\rangle -  L (x, -v) \}\\
 & =  \max_{w\in T_x M} \{\langle -p,w\rangle -  L (x, w) \} = H(x, -p),
\end{split}. 
\]
as is desired.

(ii). If $\om$ is a subsolution of $\wh H (x,d_x\om)\leq c$, then for any absolutely continuous $\gamma:[-T,T]\rightarrow M$ connecting $x,y\in M$, we get
\[
\om(y)-\om(x)\leq \int_{-T}^T\big(\wh L(\gamma,\dot\gamma)+c\big) dt.
\]
Furthermore, 
\ben
-\om(x)-(-\om(y))&=&\om(y)-\om(x)\\
&\leq &\int_{-T}^T\big( \wh L(\gamma,\dot\gamma)+c \big) dt\\
&=& \int_{-T}^T\big( L(\gamma,-\dot\gamma)+c\big) dt\\
&=& \int_{-T}^T \big( L(\wh \gamma,-\dot{\wh\gamma})+c\big) dt
\een
with $\wh\gamma(t):=\gamma(-t)$ for all $t\in[-T,T]$. As $\gamma$ is arbitrarily chosen, so we get $-\om\prec L+c$\footnote{see Definition \ref{defn:sub-sol}}, then $H(x,-d_x\om)\leq c$ for a.e. $x\in M$. Similarly, $\om\prec L+c$ indicates $-\om\prec\wh L+c$.

 (iii).  As $c(\wh H)=\inf\{c \in\R |  \exists\ \om\in C(M,\R) \text{ such that }\om\prec \wh L+c\}$, then due to (ii), $c(\wh H)=c(H)$.

(iv). Due to \textr{Proposition 2-4.3} of \cite{CI}, for any measure $\wt\mu\in\wt{\mathfrak M}$, there exists a sequence of closed measures $\wt\mu_n\in \PP_c (TM)$ (defined in Appendix \ref{a1}), such that $\wt\mu_n$ weakly converges to $\wt\mu$ and 
\[
\lim_{n\rightarrow +\infty}\int_{TM}Ld\wt\mu_n=\int_{TM}Ld\wt \mu.
\]
Moreover, for each $\wt \mu_n$ there exists an absolutely continuous curve $\gamma_n:t\in[-T_n,T_n]\rightarrow M$ with $T_n\rightarrow +\infty$ as $n\rightarrow+\infty$, such that 
\[
\int_{TM} gd\mu_n=\frac{1}{2T_n}\int_{-T_n}^{T_n}g(\gamma_n(t),\dot\gamma_n(t)) dt,\quad\forall\\ g\in C_c(TM,\R).
\]
Therefore, for  $\wh\gamma_n(t):=\gamma_n(-t)$,
 we have 
\ben
-c(H)&=&\lim_{n\rightarrow+\infty}\frac 1{2T_n}\int_{-T_n}^{T_n}L(\gamma_n(t),\dot\gamma_n(t))dt\\
&=&\lim_{n\rightarrow+\infty}\frac 1{2T_n}\int_{-T_n}^{T_n}L(\wh \gamma_n(-t),-\dot{\wh\gamma}_n(-t))dt\\
&=&\lim_{n\rightarrow+\infty}\frac 1{2T_n}\int_{-T_n}^{T_n}\wh L(\wh \gamma_n(-t),\dot{\wh\gamma}_n(-t))dt\\
&=&\lim_{n\rightarrow+\infty}\frac 1{2T_n}\int_{-T_n}^{T_n}\wh L(\wh \gamma_n(s),\dot{\wh\gamma}_n(s))ds\\&=&-c(\wh H).
\een
That indicates $S^*\wt\mu$ is a Mather measure for $\wh L(x,v)$, where $S:TM\rightarrow TM$ is a diffeomorphism defined by $S(x,v)=(x,-v)$. Namely, we have
\[
\int_{TM}g(x,v)d S^*\wt\mu(x,v):=\int_{TM} g(x,-v) d\wt\mu(x,v),\quad\forall g\in C_c(TM,\R).
\]
Due to (\ref{eq:pro-mes}) and $S\circ S=id$, 
\ben
\int_Mf(x)d\mu(x)&=&\int_{TM}f\circ\pi(x,v)d\wt\mu(x,v)\\
&=&\int_{TM}f\circ\pi(x,-v)dS^*\wt\mu(x,v)\\
&=&\int_M f(x) d\pi^*S^*\wt\mu(x,v)
\een
for all $f\in C(M,\R)$, then $\pi^*S^*\wt\mu=\mu\in\mathfrak M$. So $\wh{\mathfrak M}=\mathfrak M$.

(v). Due to the definition of the Peierls barrier function, we calculate
\ben
\wh h^\infty(y,x) &=& \liminf_{t\rightarrow +\infty} \left( \inf_{\substack{\xi\in C^{ac}([0,t],M)\\
\xi(0)=y\\
\xi(t)=x}}\int_0^{t} \wh L(\xi(s),\dot{\xi}(s)) ds+c(H)t \right) \\
&=&\liminf_{t\rightarrow +\infty} \left( \inf_{\substack{\gamma\in C^{ac}([0,t],M)\\
\gamma(t)=y\\
\gamma(0)=x}}\int_0^{t}  L\big(\gamma(t-s),-\frac{d\gamma(t-s)}{ds}\big) ds+c(H)t \right)\\
&=&  \liminf_{t\rightarrow +\infty} \left( \inf_{\substack{\gamma\in C^{ac}([0,t],M)\\
\gamma(0)=x\\
\gamma(t)=y}}\int_0^{t}  L(\gamma(\tau),\dot{\gamma}(\tau)) d\tau+c(H)t \right)\\
& = &h^\infty(x,y),
\een
where $\xi(s) := \gamma(t-s)$ for $s\in [0,t]$.
\qed
\end{proof}


Now we can propose a function
\be\label{eq:sta}
u_\lb ^+(x):=-\inf_{\substack{\gamma\in C^{ac}([0,+\infty),M)\\\gamma(0)=x}}\int_0^{+\infty}e^{-\lb t} \big(L(\gamma(t),\dot\gamma(t)) + c(H) \big) \ dt,
\ee
of which the following properties hold:

\begin{prop}\label{prop:dom-fun}
\begin{enumerate}
\item [(1)]For $\lb\in(0,1]$, $u_\lb^+$ is uniformly Lipschitz and equi-bounded on $M$, with the Lipschitz (resp. equi-bound) constant depending only  on $L$.

\item [(2)]For every $x\in M$, there {exists} a forward curve $\gamma_{\lb,x}^+:[0,+\infty)\rightarrow M$ which achieves the minimum of (\ref{eq:sta}). 

\item [(3)]\textsf{(Domination)} $u_\lb^+$ is $\lambda$-dominated by $L$ and is denoted by  $u_\lb^+\prec_{-\lb}L+c(H)$, i.e., for any $(x,y)\in M\times M$ and $b-a\geq 0$, we have 
\[
e^{-\lb b}u_\lb^+(y)-e^{-\lb a}u_\lb^+(x)\leq \inf_{\substack{\gamma\in C^{ac}([a,b],M)\\\gamma(a)=x,\gamma(b)=y}}\int_a^be^{-\lb s}\Big(L(\gamma,\dot\gamma)+c(H)\Big)ds
\]

\item  [(4)]\textsf{(Calibration)} For any $t\geq0$,
\[
u_\lb^+(x)=e^{-\lb t}u_\lb^+(\gamma_{\lb,x}^+(t))+\int_t^0e^{-\lb s}\Big(L(\gamma_{\lb,x}^+(s),\dot\gamma_{\lb,x}^+(s))+c(H)\Big)ds.
\]
 Such a curve $\gamma_{\lambda, x}^+$  is called a forward calibrated curve of $u_\lb^+$.
  
  \item [(5)] $-u_\lb^+(x)$ is the viscosity solution of the following symmetrical H-J equation 
\be\label{eq:hj-sym}
\lb u+\wh H(x,\partial_x u)=c(H),\quad\lb>0.
\ee

\end{enumerate}
\end{prop}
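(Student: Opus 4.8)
The plan is to exploit the symmetry set up in Lemma~\ref{lem:sym-h} and reduce all five assertions to the standard theory of the discounted equation for the symmetric data $\wh L,\wh H$. First I would rewrite the defining integral by the time reversal $\sigma(s):=\gamma(-s)$, $s\in(-\infty,0]$: since $\wh L(x,v)=L(x,-v)$ and $\dot\gamma(-s)=-\dot\sigma(s)$, the substitution $t=-s$ yields
\[
-u_\lb^+(x)=\inf_{\substack{\sigma\in C^{ac}((-\infty,0],M)\\\sigma(0)=x}}\int_{-\infty}^0 e^{\lb s}\big(\wh L(\sigma,\dot\sigma)+c(H)\big)\,ds,
\]
which is exactly the (backward) discounted value function attached to $\wh L$ with discount $\lb$ and critical constant $c(\wh H)=c(H)$ by Lemma~\ref{lem:sym-h}(iii). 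All five items then become the familiar properties of discounted critical value functions, now phrased for $\wh L$; this is also how the symmetric HJ equation in (5) arises.

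For (2) I would run Tonelli's direct method adapted to the nonsmooth, infinite-horizon problem: first solve the finite-horizon problems on $[-T,0]$, where lower semicontinuity of the action together with the compactness furnished by superlinearity produces a minimizer, then let $T\to+\infty$ using the $\lb$-independent a priori bounds from (1) and the exponential weight $e^{\lb s}$ to control the tails, extracting a locally uniform limit curve $\sigma_{\lb,x}$ (equivalently $\gamma_{\lb,x}^+$). The discount factor is precisely what makes the improper integral converge and the tail estimates go through. Once minimizers exist, (3) and (4) are the sub-additivity and optimality relations of dynamic programming: cutting an optimal curve at an intermediate time gives the calibration identity (4), while comparing an arbitrary competitor against the optimal one gives the domination inequality (3).

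Granting (3)--(4), part (1) is then routine. For equi-boundedness, a subsolution $\wh\om\prec\wh L+c(H)$ (which exists by the definition of $c(H)$ and Lemma~\ref{lem:sym-h}(ii)--(iii)), integrated along any competitor against the weight $e^{\lb s}$ and integrated by parts, gives a $\lb$-independent lower bound for $-u_\lb^+$; a backward calibrated curve of a critical weak KAM solution $\wh u^-$ of $\wh L$, fed into the same computation, gives the matching $\lb$-independent upper bound $-u_\lb^+\le 2\|\wh u^-\|_\infty$. Equi-Lipschitzness follows from the domination (3) together with coercivity: coercivity bounds the speeds of competitors, so $\sup\{\wh L(x,v):|v|\le R\}$ controls the cost of short connecting arcs, and the resulting Lipschitz and bound constants depend only on $L$.

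Finally, for (5) the domination (3) shows $-u_\lb^+$ is a viscosity subsolution, and the calibration (4) shows it is a supersolution, of $\lb u+\wh H(x,\partial_x u)=c(H)$; this is the usual weak-KAM/viscosity dictionary, valid in the $C^0$ setting by \cite{DS,F}. The main obstacle is (2): with $L$ only continuous there is no Euler--Lagrange equation to produce extremals, so the existence of the infinite-horizon minimizer and the rigorous dynamic programming principle must be obtained purely variationally, through semicontinuity, superlinear compactness, and careful control of the discounted tails as $T\to+\infty$. All later items lean on this, so it is where the genuine work lies.
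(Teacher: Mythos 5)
Your proposal is correct and takes essentially the same route as the paper: the decisive step in both is the time-reversal identity $-u_\lb^+(x)=\inf_{\sigma(0)=x}\int_{-\infty}^0 e^{\lb s}\big(\wh L(\sigma,\dot\sigma)+c(H)\big)\,ds$, which reduces all five items to the standard discounted theory for $\wh L$ developed in \cite{DFIZ} (the paper simply cites Appendix 2 and Propositions 6.2--6.3 there for items (2)--(5), where you sketch the variational arguments directly). The only organizational difference is that the paper proves (5) first and then obtains the equi-bound in (1) via the comparison principle with shifted critical solutions, whereas you derive the $\lb$-independent bounds and the sub/supersolution properties from domination and calibration; both orderings are standard and rest on the identical symmetry reduction, and your geodesic-concatenation idea for the Lipschitz estimate is exactly the computation the paper carries out.
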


\begin{rmk}[forward $\lb-$weak KAM solution]\label{defn:lb-w-kam}
A continuous function $w:M\rightarrow\R$ is called a {\sf forward $\lb-$weak KAM solution} of (\ref{eq:d-hj-n}) if it satisfies item (3) and (4) of Proposition \ref{prop:dom-fun}. Due to Property (5) of Proposition \ref{prop:dom-fun}, such a forward $\lb-$weak KAM solution is unique.
\end{rmk}
\vspace{10pt}

\noindent{\it Proof of Proposition \ref{prop:dom-fun}:} 
(5) By a simple transformation, we can see 
\ben
-u_\lb^+(x)=\inf_{\substack{\xi\in C^{ac}((-\infty,0],M)\\\xi(0)=x}}\int^0_{-\infty}e^{-\lb t} \big(\wh L(\xi(t),\dot\xi(t)) + c(\wh H) \big) \ dt.
\een
Due to Appendix 2 of \cite{DFIZ}, $-u_\lb^+$ is a viscosity solution of \eqref{eq:hj-sym}, which is unique due to the { Comparison Principle}. \medskip

(1) For any viscosity solution $u_0(x)$ of
\eqref{eq:hj}, we get 
\[
\underline{u}_0(x):=u_0(x)-\|u_0\|\leq 0\leq u_0(x)+\|u_0\|:=\bar u_0(x),\quad\forall x\in M.
\]
Consequently, $\underline u_0$ (resp. $\bar u_0$) is a subsolution (resp. supersolution) of \eqref{eq:hj-sym}. Due to the Comparison Principle again, for any $\lb>0$ and any viscosity solution $\om_\lb$ of \eqref{eq:hj-sym} satisfies $\underline u_0\leq \om_\lb\leq \bar u_0$.
So we get the equi-boundedness of $\{u_\lb^+\}_{\lb\in(0,1]}$.\medskip

Let $\gamma: [0, d(x,y)] \rightarrow M$ be the geodesic joining  $y$ to $x$ parameterized by the arc-length, \textr{where $d:M\times M\rightarrow\R$ is the Euclidean distance}. For every absolute continuous curve $\xi: [0, +\infty) \rightarrow M$ with $\xi(0) =x$, we define a curve 
\begin{equation}
\eta(t)=
\left\{
 \begin{aligned}
 &\gamma(t), \qquad\qquad\qquad t\in [0, d(x,y)],\\
 &\xi(t-d(x, y)),\qquad t\in [d(x,y), +\infty).
 \end{aligned}
 \right.
\end{equation}
Then we have
\[
\begin{split}
-u_\lambda^+(y)  & \leq \int_0^{+\infty}e^{-\lb t} \big(L(\eta(t),\dot\eta(t)) + c(H) \big) \ dt\\
&\leq \int_0^{d(x,y)}e^{-\lb t} \big(L(\gamma(t),\dot\gamma(t)) + c(H) \big) \ dt \\ &\quad+ \int_{d(x,y)}^{+\infty}e^{-\lb t} \big(L(\xi(t-d(x,y) ) ),\dot\xi(t-d(x,y) ) ) + c(H) \big) \ dt\\
&\leq  \int_0^{d(x,y)}e^{-\lb t} \big(L(\gamma(t),\dot\gamma(t)) + c(H) \big) \ dt \\
&\quad  + e^{\lambda d(x,y)} \int_{0}^{+\infty}e^{-\lb t} \big(L(\xi(t) ),\textr{\dot\xi(t)} ) + c(H) \big) \ dt.
\end{split}
\]
By minimizing with respect to all $\xi\in C^{ac}\big([0,+\infty)\big)$ with $\xi(0) = x$, we obtain
\[
-u_\lambda^+(y) \leq -e^{\lambda d(x,y)} u_\lambda^+(x) + \int_0^{d(x,y)}e^{-\lb t} \big(L(\gamma(t),\dot\gamma(t)) + c(H) \big) \ dt.
\]
Therefore, we have
\[
\begin{split}
u_\lambda^+(x) - u_\lambda^+(y) &\leq \big(1-e^{\lambda d(x,y)} \big) u_\lambda^+(x) + \int_0^{d(x,y)}e^{-\lb t} \big(L(\gamma(t),\dot\gamma(t)) + c(H) \big) \ dt \\
&\leq \frac{1-e^{\lambda d(x,y)} }{\lambda} \left( |\lambda u_\lambda^+(x)| + C_1\right) \leq (C+C_1) d(x,y),
\end{split}
\]where $C:=\max\{|\max_{x\in M}L(x,0)+c(H)|, |\min_{(x,v)\in TM}L(x,v)+ c(H)\}$ and $C_1:=\max\{ L(x,v)~:~ x\in M , \|v\|\leq 1\}$. 
By exchanging the role of $x$ and $y$, we get the other inequality, which shows that $u_\lambda^+$ is uniformly Lipschitz and the Lipschitz constant is independent of $\lb$. \medskip

(2), (3) \& (4) By a similar analysis as Propositions 6.2--6.3 in \cite{DFIZ}, all these three items can be easily proved.
\qed



\section{Discounted limit of forward $\lb-$weak KAM solutions}\label{s3}

Recall that $\wh u_\lb^-:=-u_\lb^+$  is the unique  viscosity solution of (\ref{eq:hj-sym}), 
\[
\wh u_\lb^-(x)=\inf_{\gamma(0)=x}\int_{-\infty}^0 e^{\lb  t}\big(\wh L(\gamma,\dot\gamma) + c(H)\big)dt.
\]
So the following conclusion holds instantly:

\begin{lem}\cite{DFIZ}\label{lem:sym}
As $\lb\rightarrow 0_+$, $\wh u_\lb^-$ converges to a unique function $\wh u_0^-$ which is a viscosity solution of the following
\be\label{eq:hj-sym0}
\wh H(x,\partial_x u)=c( H)
\ee
with the following two different interpretations:
\begin{itemize}
\item $\wh u_0^-$ is the maximal subsolution $w:M\rightarrow \R$ of \eqref{eq:hj-sym0}
 such that for any projected Mather measure $\wh \mu\in\wh{\mathfrak M}$, $\int_Mw\cdot d{\wh \mu}\leq 0$.
\item  $\wh u_0^-$ is the infimum of functions $\wh h_\mu^\infty$ defined by 
\[
\wh h_\mu^\infty(x):=\int_M \wh h^\infty(y,x) d\wh\mu(y),\quad \wh\mu\in\wh{\mathfrak M}.
\]
\end{itemize}
\end{lem}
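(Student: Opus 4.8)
The plan is to recognize Lemma~\ref{lem:sym} as nothing more than the main vanishing-discount theorem of \cite{DFIZ}, applied verbatim to the symmetric Hamiltonian $\wh H$ in place of $H$. So the entire argument reduces to checking that $\wh H$ meets the structural hypotheses under which \cite{DFIZ} operates, and that the distinguished function they produce is exactly $\wh u_0^-$ with the two advertised interpretations.

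First I would record that all the prerequisites have already been assembled. By Lemma~\ref{lem:sym-h}(i), $\wh H(x,p)=H(x,-p)$ is continuous, coercive (superlinear) and convex in $p$, so it falls inside the class of Hamiltonians for which \cite{DFIZ} is stated. By Lemma~\ref{lem:sym-h}(iii), its Ma\~n\'e critical value coincides, $c(\wh H)=c(H)$, so the critical discounted equation attached to $\wh H$,
\[
\lb u+\wh H(x,\partial_x u)=c(\wh H),
\]
is precisely \eqref{eq:hj-sym}. Finally, Proposition~\ref{prop:dom-fun}(5) identifies $\wh u_\lb^-=-u_\lb^+$ as the unique viscosity solution of this equation. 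Thus $\{\wh u_\lb^-\}_{\lb>0}$ is exactly the family to which the theorem of \cite{DFIZ} applies.

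Next I would invoke that theorem directly: it guarantees uniform convergence of $\wh u_\lb^-$ as $\lb\rightarrow 0_+$ to a unique viscosity solution $\wh u_0^-$ of \eqref{eq:hj-sym0}. The same theorem furnishes the two representation formulas, now read off with every object replaced by its counterpart for $\wh H$: the limit is the maximal subsolution $w$ of \eqref{eq:hj-sym0} subject to $\int_M w\,d\wh\mu\leq 0$ for every projected Mather measure $\wh\mu\in\wh{\mathfrak M}$, and it is simultaneously the infimum over $\wh\mu\in\wh{\mathfrak M}$ of $\wh h_\mu^\infty(x)=\int_M\wh h^\infty(y,x)\,d\wh\mu(y)$, where $\wh h^\infty$ is the Peierls barrier of $\wh L$. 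This is exactly the content of the two bullet points.

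The one point that requires care -- and the reason the lemma is worth isolating -- is that the entire \cite{DFIZ} apparatus (projected Mather measures, Peierls barrier, the duality between subsolutions and measures) must be available for $\wh H$ in the merely continuous setting. There is no genuine obstacle, however: these objects for $\wh H$ are legitimate because $\wh H$ satisfies the very hypotheses under which they were constructed, and their explicit relation to the objects for $H$ is already recorded in Lemma~\ref{lem:sym-h}(iv)--(v), namely $\wh{\mathfrak M}=\mathfrak M$ and $\wh h^\infty(y,x)=h^\infty(x,y)$. Consequently no new estimate is needed; the lemma follows by substitution. I would flag that the identities of Lemma~\ref{lem:sym-h}(iv)--(v) are not used to \emph{prove} the convergence but will be the bridge, in the next step, for rewriting $\wh u_0^-$ in terms of the data of the original Hamiltonian $H$ and thereby extracting the formulas \eqref{eq:1}--\eqref{eq:2} for $u_0^+=-\wh u_0^-$.
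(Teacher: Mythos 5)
Your proposal is correct and matches the paper's treatment exactly: the paper states Lemma~\ref{lem:sym} as a direct citation of the vanishing-discount theorem of \cite{DFIZ} applied to $\wh H$, relying (just as you do) on Lemma~\ref{lem:sym-h} for the structural hypotheses and the identity $c(\wh H)=c(H)$, and on Proposition~\ref{prop:dom-fun}(5) to identify $\wh u_\lb^-=-u_\lb^+$ as the relevant discounted solution family. Your closing remark that Lemma~\ref{lem:sym-h}(iv)--(v) enters only afterward, to translate $\wh u_0^-$ into the formulas for $u_0^+$, is also precisely how the paper proceeds.
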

Due to Lemma \ref{lem:sym-h} and Lemma \ref{lem:sym}, we get 
\[
u_0^+:=\lim_{\lb\rightarrow 0_+}u_\lb^+=-\lim_{\lb\rightarrow 0_+}\wh u_\lb^-=-\wh u_0^-
\]
which is uniquely established and interpreted as the following:

\begin{lem}\label{lem:for}
$u_0^+$ is a forward $0-$weak KAM solution
of (\ref{eq:hj}). 
\end{lem}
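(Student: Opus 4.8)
The plan is to bypass a direct limit argument and instead reduce everything to the symmetric system, where the corresponding object is already equipped with the full weak KAM structure. The starting point is the identity $u_0^+=-\wh u_0^-$ recorded just before the statement, together with Lemma \ref{lem:sym}. By the vanishing-discount theory of \cite{DFIZ} applied to $\wh H$ (equivalently, the two interpretations in Lemma \ref{lem:sym} are exactly the weak KAM characterizations), $\wh u_0^-$ is not merely a viscosity solution of \eqref{eq:hj-sym0} but a genuine \emph{backward} $0$-weak KAM solution of $\wh L$: it is dominated, $\wh u_0^-\prec\wh L+c(H)$, and for every $x\in M$ there is a backward calibrated curve $\xi_x\colon(-\infty,0]\to M$ with $\xi_x(0)=x$ along which the domination is an equality. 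The whole proof then amounts to transporting domination and calibration across the substitution $v\mapsto -v$ supplied by Lemma \ref{lem:sym-h}(i).

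First I would transfer the domination. Using $u_0^+=-\wh u_0^-$ and $\wh L(x,v)=L(x,-v)$, the inequality $\wh u_0^-\prec\wh L+c(H)$ reads, for any curve $\xi$ on $[a,b]$ joining $p$ to $q$, $u_0^+(p)-u_0^+(q)\le\int_a^b\big(L(\xi,-\dot\xi)+c(H)\big)\,ds$. Setting $\gamma(s):=\xi(a+b-s)$ reverses time, so $\gamma$ joins $q$ to $p$, and the change of variables turns the right-hand side into $\int_a^b\big(L(\gamma,\dot\gamma)+c(H)\big)\,ds$. Relabelling the endpoints yields the $0$-domination $u_0^+\prec L+c(H)$ demanded by Definition \ref{defn:w-kam} (the $\lb=0$ form of item (3) of Proposition \ref{prop:dom-fun}).

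Next I would transfer the calibration. Fix $x$, take the backward calibrated curve $\xi_x$ of $\wh u_0^-$, and define $\gamma_x(s):=\xi_x(-s)$ for $s\ge0$, so that $\gamma_x(0)=x$ and $\dot\gamma_x(s)=-\dot\xi_x(-s)$. The backward calibration identity $\wh u_0^-(x)-\wh u_0^-(\xi_x(-t))=\int_{-t}^0\big(\wh L(\xi_x,\dot\xi_x)+c(H)\big)\,ds$ becomes, after negation and the same reversal/substitution, $u_0^+(\gamma_x(t))-u_0^+(x)=\int_0^t\big(L(\gamma_x,\dot\gamma_x)+c(H)\big)\,ds$ for all $t\ge0$. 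Rearranged, this is exactly the forward calibration of item (4) of Proposition \ref{prop:dom-fun} at $\lb=0$. Together with the domination, $u_0^+$ meets Definition \ref{defn:w-kam}, so it is a forward $0$-weak KAM solution of \eqref{eq:hj}.

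I expect the only genuine subtlety to be justifying that $\wh u_0^-$ carries the full weak KAM structure, in particular the \emph{existence} of the backward calibrated curves, rather than just the viscosity property stated in Lemma \ref{lem:sym}; in the $C^0$ regime the classical Tonelli arguments are unavailable, so one must lean on the nonsmooth substitutes of \cite{DS} and the equivalence in \cite{F}. A self-contained alternative would avoid the symmetric system altogether and pass to the limit $\lb\to0_+$ directly in items (3)--(4) of Proposition \ref{prop:dom-fun}: the domination passes immediately, since $u_\lb^+\to u_0^+$ uniformly and $e^{-\lb s}\to1$, but recovering a forward calibrated curve for $u_0^+$ would require extracting a limit of the curves $\gamma_{\lb,x}^+$ by Arzel\`a--Ascoli (using the uniform velocity bounds from superlinearity together with Proposition \ref{prop:dom-fun}(1)) and invoking lower semicontinuity of the action. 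That second route is the delicate one, which is precisely why the symmetric reduction is the preferable path.
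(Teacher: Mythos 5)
Your skeleton matches the paper's: reduce to the symmetric system, argue that $\wh u_0^-$ is a backward $0$-weak KAM solution of \eqref{eq:hj-sym0}, and then flip sign and time. The transport computations you carry out (domination under the reversal $\gamma(s)=\xi(a+b-s)$, calibration under $\gamma_x(s)=\xi_x(-s)$) are correct, and they usefully make explicit what the paper compresses into its final sentence. But those are the routine part of the argument.

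The genuine gap is the step you flag and then defer: that $\wh u_0^-$ actually admits backward calibrated curves. You assert this follows ``by the vanishing-discount theory of \cite{DFIZ}'', but \cite{DFIZ} delivers only convergence to a \emph{viscosity} solution; the equivalence between viscosity solutions and backward weak KAM solutions is proved in \cite{F} for Tonelli Hamiltonians and is precisely what is not available off-the-shelf for merely continuous $H$. Closing this hole is the entire content of the paper's proof of the lemma: domination $\wh u_0^-\prec\wh L+c(H)$ comes from Proposition 5.3 of \cite{DFIZ}; then, by uniqueness for the Cauchy problem, $\wh u_0^-$ is shown to be a fixed point of the nonsmooth Lax--Oleinik evolution $U(x,t)$; and finally the analysis of Proposition 6.2 of \cite{DFIZ} is rerun to concatenate finite-time action minimizers into a backward calibrated ray $\gamma_x^-:(-\infty,0]\to M$. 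Two of your side remarks also need correction. First, ``classical Tonelli arguments are unavailable'' in the $C^0$ regime misidentifies the obstruction: the direct method giving \emph{existence} of minimizers of $h^t$ needs only continuity, convexity and superlinearity (this is exactly how the paper's appendix treats $h^t$), and that existence is what powers the paper's construction; what fails is \emph{regularity} of minimizers and of the value function. Second, and for the same reason, your Arzel\`a--Ascoli fallback is shakier than you suggest: the curves $\gamma_{\lb,x}^+$ of Proposition \ref{prop:dom-fun} need not be uniformly Lipschitz, since superlinearity yields only integrated, not pointwise, velocity bounds for a continuous Lagrangian; one would instead need equi-absolute continuity from uniform action bounds together with weak $W^{1,1}$ compactness and Tonelli-type lower semicontinuity of the action. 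So while your pointer to \cite{DS} is in the right direction (the paper itself invokes \cite{DS} when transplanting weak KAM notions to the $C^0$ case), a pointer is not a proof, and the calibrated-curve construction must be supplied as the paper does.
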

\begin{proof}
As $\wh u_0^-$ is the viscosity solution of (\ref{eq:hj-sym0}), then \textr{$\wh u_0^-\prec \wh L+c(H)$ due to Proposition 5.3 of \cite{DFIZ}. On the other side, due to 
$$
U(x,t):=\inf_{\substack{\gamma\in C^{ac}([0,t],M) \\\gamma(0)=x }}\{\wh u_0^-(\gamma(-t))+\int^0_{-t}\wh{L}(\gamma(\tau),\dot{\gamma}(\tau))+c(H)\mbox{d}\tau\}, \quad\forall t\geq 0, 
$$
is the unique viscosity solution of the Cauchy problem 
 \[
 \left\{
 \begin{aligned}
 &\partial_tu+\wh{H}(x,d_xu)=c(H)\\
 &u(x,0)=\wh u_0^-(x),\quad t\geq 0, 
 \end{aligned}
 \right.
 \]
whereas $\wh u_0^-(x)$ is also a viscosity solution of the Cauchy problem. So it follows that $U(x,t)=\wh u_0^-(x)$ for all $x\in M,t>0$. Hence, by the same analysis as in the proof of
Proposition 6.2 of \cite{DFIZ}, for any $x\in M$, there exists a curve $\gamma_x^-:(-\infty,0]\rightarrow M$ absolutely continuous and ending with $x$, such that 
\[
\wh u_0^-(\gamma_x^-(t))-\wh u_0^-(\gamma_x^-(s))=\int_s^t\wh{L}(\gamma_x^-(\tau),\dot{\gamma}_x^-(\tau))+c(H)\mbox{d}\tau
\]
for all $s\leq t\leq 0$. }After all, $\wh u_0^-$ has to be a backward $0-$weak KAM solution of \eqref{eq:hj-sym0}. Consequently, $u_0^+=-\wh u_0^-$ has to be a forward $0-$weak KAM solution of (\ref{eq:hj}).\qed
\end{proof}



\vspace{20pt}

\noindent{\it Proof of Theorem \ref{thm:1}:} {It's a direct corollary from Lemma \ref{lem:sym-h},  \ref{lem:sym} and \ref{lem:for}.\qed

\vspace{40pt}

\appendix

\section{Aubry-Mather theory of nonsmooth convex Hamiltonians}\label{a1}

As is known, the continuous, \textr{superlinear}, convex $H(x,p)$ has a dual Lagrangian
\be\label{eq:leg}
L(x,v):=\max_{p\in T_x^*\T^n} \{\langle p,v\rangle-H(x,p)\},\quad (x,v)\in TM
\ee
which is also continuous, superlinear and convex in $v$. Consequently, for any $x,y\in M$ and $t>0$, the {\it action function}
\be\label{eq:act}
h^t(x,y):=\inf_{\substack{\gamma\in C^{ac}([0,t],M)\\\gamma(0)=x,\gamma(t)=y}}\int_0^t \big(L(\gamma,\dot\gamma)+c(H)\big) ds
\ee
always attains its infimum at an absolutely continuous minimizing curve $\gamma_{\min}:[0,t]\rightarrow M$
due to the {\sf Tonelli Theorem}.
In \cite{Ma2}, the {\sf Peierls barrier} function 
\be\label{eq:peierl}
h^\infty(x,y):=\liminf_{t\rightarrow+\infty}h^t(x,y)
\ee
is proved to be well-defined and continuous on $M\times M$.
\begin{defn}\cite{Ma2}
The {\sf projected Aubry set} is defined by 
\[
\cA:=\{x\in M: h^\infty(x,x)=0\}
\]
\end{defn}


Consider $TM$ (resp. $M$) as a measurable space and $\PP(TM)$ (resp. $\PP(M)$) by the set of all Borel probability measures on it.  A measure on $TM$ is denoted by $\wt\mu$, and we remove the tilde if we project it to $M$. We say that a sequence
$\{\wt\mu_n \}_n$ of probability measures weakly converges to a probability 
measure $\wt\mu$ if
\[
\lim_{n\rightarrow+\infty}\int_{TM}f(x,v)d\wt\mu_n(x,v)=\int_{TM} f(x,v)d\wt\mu(x,v)
\]
for any $f\in C_c(TM,\R)$. Accordingly, the deduced probability measure $\mu_n$ weakly converges to $\mu$, i.e.
\be\label{eq:pro-mes}
\lim_{n\rightarrow+\infty}\int_M f(x)d\mu_n(x)&:=&\lim_{n\rightarrow+\infty}\int_{TM} f(x)d\pi^*\wt\mu_n(x)\\
&=&\lim_{n\rightarrow+\infty}\int_{TM} f\circ \pi(x,v) d\wt\mu_n(x,v)\nonumber\\
&=&\int_{TM} f\circ\pi(x,v)d\wt\mu(x,v)\\
&=&\int_Mf(x)d\pi^*\wt\mu(x)=:\int_M f(x)d\mu(x)
\ee
for any $f\in C(M,\R)$.

\begin{defn}
A probability measure $\wt\mu$ on $TM$ is {\sf closed} if it satisfies:
\begin{itemize}
\item $\int_{TM}|v|d\wt\mu(x,v)<+\infty$;
\item $\int_{TM}\langle \nabla\phi(x),v\rangle d\wt\mu(x,v)=0$ for every $\phi\in C^1(M,\R)$.
\end{itemize}
\end{defn}
Let's denote by $\PP_c(TM)$ the set of all closed measures on $TM$, then the following conclusion is proved in \cite{DFIZ}:
\begin{thm}\label{thm:mane}
$\min_{\wt\mu\in\PP_c(TM)}\int_{TM}L(x,v)d\widetilde{\mu}(x,v)=-c(H)$. Moreover, the minimizer is called a {\sf Mather measure} and we denote by $\wt{\mathfrak M}$ the set of them. Similarly, we can project $\wt{\mathfrak M}$ to $\mathfrak M\subset\PP(M)$ w.r.t. $\pi:TM\rightarrow M$, which contains all the {\sf projected Mather measures}.
\end{thm}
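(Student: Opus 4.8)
The plan is to read the identity $\min_{\wt\mu\in\PP_c(TM)}\int_{TM}L\,d\wt\mu=-c(H)$ as a Fenchel-type duality between closed measures and subsolutions, and to split it into the transparent lower bound $\inf\int_{TM}L\,d\wt\mu\geq -c(H)$ and the harder combination of the upper bound with the attainment of the minimum. The definitions of Mather measure and of the projected family $\mathfrak M$ need no argument: they merely name the minimizers and push them forward by $\pi$.

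For the lower bound I would start from a sufficiently regular subsolution. If $\omega\in C^1(M)$ satisfies $H(x,d_x\omega)\leq c$ for every $x$, then Fenchel's inequality applied to \eqref{eq:leg} gives $L(x,v)\geq\langle d_x\omega,v\rangle-H(x,d_x\omega)\geq\langle d_x\omega,v\rangle-c$ for all $(x,v)\in TM$. Integrating against any $\wt\mu\in\PP_c(TM)$ and using the defining property of closed measures ($\int_{TM}\langle d_x\omega,v\rangle\,d\wt\mu=0$) yields $\int_{TM}L\,d\wt\mu\geq -c$, and letting $c\downarrow c(H)$ gives $\int_{TM}L\,d\wt\mu\geq -c(H)$ for every closed measure. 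The delicate point is that the definition of $c(H)$ a priori only provides continuous subsolutions with $H(x,d_x\omega)\leq c$ a.e., while closedness is tested against $C^1$ functions; so I would either invoke the existence of $C^1$ critical subsolutions or mollify $\omega$, which in the present $C^0$ framework is exactly one of the technical points requiring care.

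For the value and the attainment I would combine a minimax computation with a compactness argument. Observing that $\sup_{\phi\in C^1(M)}\int_{TM}\langle d_x\phi,v\rangle\,d\wt\mu$ equals $0$ when $\wt\mu$ is closed and $+\infty$ otherwise, I can write $\inf_{\PP_c(TM)}\int_{TM}L\,d\wt\mu=\inf_{\wt\mu\in\PP(TM)}\sup_{\phi}\int_{TM}\big(L+\langle d_x\phi,v\rangle\big)\,d\wt\mu$. A minimax exchange (Sion's theorem, using linearity in both arguments) turns this into $\sup_{\phi}\inf_{\wt\mu}\int_{TM}\big(L+\langle d_x\phi,v\rangle\big)\,d\wt\mu=\sup_{\phi}\inf_{(x,v)}\big(L(x,v)+\langle d_x\phi,v\rangle\big)$. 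Since $\inf_v\big(L(x,v)+\langle d_x\phi,v\rangle\big)=-H(x,-d_x\phi)$ by \eqref{eq:leg}, the right-hand side equals $-\inf_{\phi\in C^1}\sup_{x\in M}H(x,d_x\phi)=-c(H)$ after the substitution $\phi\mapsto-\phi$, the last equality being the $C^1$ characterization of the critical value. For attainment I would take a minimizing sequence $\wt\mu_n$ of closed measures with $\int_{TM}L\,d\wt\mu_n\to -c(H)$; the bounded action together with the superlinearity of $L$ forces tightness of $\{\wt\mu_n\}$, so by Prokhorov a subsequence converges weakly to some $\wt\mu$. Lower semicontinuity of $\int_{TM}L\,d\cdot$ (via truncation, using that $L$ is continuous and bounded below) gives $\int_{TM}L\,d\wt\mu\leq -c(H)$, while closedness passes to the limit once the linear-growth integrand $\langle d_x\phi,v\rangle$ is controlled by a uniform first-moment bound; thus the infimum is a genuine minimum.

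The main obstacle throughout is the non-compactness of $TM$ together with the unboundedness of $L$: weak convergence of measures is tested only against $C_c(TM)$, so both the tightness needed for compactness and the justification of the minimax exchange hinge on using superlinearity of $L$ to dominate the linear growth of the terms $\langle d_x\phi,v\rangle$ and to keep the first moments $\int_{TM}|v|\,d\wt\mu$ uniformly bounded. In the present $C^0$-Hamiltonian setting the secondary difficulty is the tightness of the duality, i.e. ensuring that $C^1$ critical subsolutions (or sufficiently good approximations) are available so that $\inf_{\phi\in C^1}\sup_{x}H(x,d_x\phi)$ genuinely coincides with $c(H)$; this is where I would lean on the regularization results underlying \cite{DFIZ}.
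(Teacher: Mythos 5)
The paper offers no internal proof of this theorem: it is imported wholesale from \cite{DFIZ} (``the following conclusion is proved in \cite{DFIZ}''), so there is nothing in the text itself to compare against. Judged on its own merits, your plan is correct and is essentially the standard convex-duality proof of this identity, in the same spirit as the argument in \cite{DFIZ}: lower bound by integrating Fenchel's inequality for (approximate) $C^1$ subsolutions against closed measures, value via a minimax exchange using $\inf_{v}\bigl(L(x,v)+\langle d_x\phi,v\rangle\bigr)=-H(x,-d_x\phi)$, attainment via tightness and lower semicontinuity. You also correctly identify the two genuine technical points of the $C^0$ setting: that $c(H)$ is defined through merely Lipschitz a.e.\ subsolutions while closedness is tested against $C^1$ functions (fixed by mollification, which works precisely because $H$ is convex in $p$ and continuous, via Jensen's inequality), and that all compactness must be extracted from the superlinearity of $L$.

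Two refinements you should make explicit to turn the sketch into a proof. First, Sion's theorem needs one of the two sets compact, and $\PP(TM)$ is not weakly compact; the standard remedy is to restrict the infimum to an action sublevel set such as $\{\wt\mu\in\PP(TM):\int_{TM}L\,d\wt\mu\le \min_{x}L(x,0)\}$, which is tight (hence weakly compact, using lower semicontinuity of the action) by superlinearity, and which loses nothing because the closed Dirac measures $\delta_{(x,0)}$ already realize the bound $\min_xL(x,0)$. Second, in the attainment step a uniform \emph{first-moment} bound alone does not let you pass $\int_{TM}\langle d_x\phi,v\rangle\,d\wt\mu_n=0$ to the limit, since the integrand has linear growth; what you need is uniform integrability of $|v|$, which follows from the uniform action bound together with superlinearity (a de la Vall\'ee-Poussin criterion with $L$ itself as the superlinear gauge) --- your closing remark about ``superlinearity dominating the linear growth'' is exactly this point, so state it as uniform integrability rather than a first-moment bound. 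With those two clarifications your argument is complete, and arguably more self-contained than the paper, which delegates the entire statement to \cite{DFIZ}.
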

\begin{defn}\cite{Ma}\label{defn:mat}
The {\sf Mather set} is defined by 
\[
\wt\cM:=\overline{\bigcup_{\tilde{\mu}\in\wt{\mathfrak M}}supp(\tilde{\mu})}\subset TM
\]
and the  {\sf projected Mather set} $\cM:=\pi\wt\cM$ is accordingly defined. 
\end{defn} 

\begin{defn}[subsolution]\label{defn:sub-sol}
 A function $u:M\rightarrow\R$ is called a {\sf viscosity subsolution}, or {\sf subsolution} for short of 
 \[
 H(x,d_x u)= c,\quad x\in M
 \]
 (denoted by $u\prec L+c$), if $u(y)-u(x)\leq h^t(x,y)+(c-c(H))t$ for all $(x,y)\in M\times M$ and $t\geq 0$. 
\end{defn}
\begin{defn}\label{defn:w-kam}
A function $u:M\rightarrow \R$ is called a {\sf backward (resp. forward) $0-$weak KAM solution} of (\ref{eq:hj}) if it satisfies:
\begin{itemize}
\item $u\prec L+c(H)$, i.e. for any two points $(x,y)\in M\times M$ and any absolutely continuous curve $\gamma:[a,b]\rightarrow M$ connecting them, we have 
\[
u(y)-u(x)\leq \int_a^b \big( L(\gamma,\dot\gamma)+c(H)\big) dt.
\]
\item for any $x\in M$ there exists a curve $\gamma_x^-:(-\infty,0]\rightarrow M$ (resp. $\gamma_x^+:[0,+\infty)\rightarrow M$) ending with (resp. starting from) $x$, such that for any $s<t\leq 0$ (resp. $0\leq s<t$), 
\[
u(\gamma_x^-(t))-u(\gamma_x^-(s))=\int_s^t \big( L(\gamma_x^-,\dot\gamma_x^-)+c(H) \big) dt
\]
\[
\bigg(resp.\quad u(\gamma_x^+(t))-u(\gamma_x^+(s))=\int_s^t \big( L(\gamma_x^+,\dot\gamma_x^+)+c(H)\big) dt\bigg).
\]
\end{itemize} 
\end{defn}

\vspace{40pt}

\end{document}